\newtheorem{theorem}{Theorem}[section]
\newtheorem{lemma}[theorem]{Lemma}
\theoremstyle{definition}
\newtheorem{definition}[theorem]{Definition}
\theoremstyle{remark}
\newtheorem{remark}[theorem]{Remark}
\numberwithin{equation}{section}
\def\R{\mathbb{R}}
\begin{document}

\title[A proof of the fundamental theorem]
 {A proof of the fundamental theorem of curves in space and its applications.}

\author{H\'ector Efr\'en Guerrero Mora}
\address{Department of Mathematics, Universidad del Cauca, Cauca, Colombia}
\email{heguerrero@unicauca.edu.co}
\thanks{The first author was supported in part by Universidad del Cauca project ID 4558.}


\subjclass[2000]{Primary 53A04; Secondary 53A55}
\keywords{fundamental theorem of curves in space, general helices, slant helices}
\date{december 7, 2018}
\dedicatory{To my family.}

\begin{abstract}
We give a necessary and suficente condition for the existence of a space curve with curvature $\kappa$ and torsion $\tau$ finding a solution of a nonlinear differential equation of second order and some applications are given for the general helices and slant helices.
\end{abstract}

\maketitle
\section{The fundamental theorem.}
The curves parameterized in space are objects of great interest and with them we can model and analyze problems that appear in different fields of physics and other sciences.\\ In the classical differential geometry of curves it is well known that there are two geometric invariants that are: curvature and torsion; which determine the behavior of the curve in space. The fundamental theorem of the local theory of curves in space states that if two differentiable $\kappa=\kappa(s)$ and $\tau=\tau(s)$ functions are given and positive  $\kappa=\kappa(s)$ shows that there is a curve whose curvature is $\kappa=\kappa(s)$ and whose torsion is  $\tau=\tau(s)$ and any other curve that meets the conditions that its curvature is $\kappa=\kappa(s)$ and its torsion is $\tau=\tau(s)$ differs from the previous curve by a rigid movement.\\
The demonstration of this result, which appears in most texts of differential geometry, is done considering a system of nine differential equations and using the existence and uniqueness theorem of ordinary differential equations.\\
In this paper we show a new proof of \textbf{the theorem of the local theory of curves in space} \cite{DoC:76}.
\\ Considering a differential equation of non-linear second order:
\begin{equation*}
\frac{d}{ds}\{\frac{1}{\kappa}\frac{d\omega}{ds}\}=-\kappa \omega+\tau\sqrt{1-\omega^2-(\frac{1}{\kappa}\frac{d\omega}{ds})^2}.
\end{equation*}
We shall prove the fundamental theorem in the form
\begin{theorem}
Given differentiable function $\kappa(s)>0$ and the continous function $\tau(s)$, $s\in (a,b)$, there exists a regular parametrized curve $\alpha:J\rightarrow \R^3$ such that $s$ is the arc length, $\kappa(s)$ is the curvature, and $\tau(s)$ is the torsion of $\alpha.$ Moreover, any other curve $\widetilde{\alpha}$, satisfying the same conditions, differs from $\alpha$ by a rigid motion; that is, there exists an orthogonal linear map $\rho$ of $\R^3,$ with positive determinant, and a vector $c$ such that $\widetilde{\alpha}=\rho\circ\alpha +c.$
\end{theorem}
In books on elementary differential geometry (see for example \cite{KuWo:06}, the condition for function $\tau$ is usually given differentiable.\\
The new proof we give of this theorem is based on a theorem of existence and uniqueness for a nonlinear differential equation of second orden. \ref{teorema de existencia y unicidad}
\section{Proof of the fundamental theorem.}
\begin{lemma}\label{teorema de existencia y unicidad}
Let $\kappa:[c,d]\rightarrow \R$ be a function always positive of class $C^1$, and\\ $\tau:[c,d]\rightarrow \R$ be a function of class $C^0$.Then the second-order differential equation
\begin{equation}\label{Ecuacion}
\frac{d}{ds}\{\frac{1}{\kappa}\frac{dw}{ds}\}=-\kappa w+\tau\sqrt{1-w^2-(\frac{1}{\kappa}\frac{dw}{ds})^2},
\end{equation}with initial value \begin{eqnarray*}
w(s_1)&=&w_1\\
w'(s_1)&=&v_1,
\end{eqnarray*}where $s_1\in (c,d)$ and $(w_1,v_1)\in \{(w,v)\in \R^2\mid w^2+\frac{v^2}{\kappa_0^2}<1\}$, and \\$\kappa_0=\text{min}\{\kappa(s)\mid s\in [c,d]\}$, has a unique solution on some interval open $J\subset (c,d)$ containing $s_1$.
\end{lemma}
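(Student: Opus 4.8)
The plan is to recast the scalar second-order equation \eqref{Ecuacion} as a first-order system on the plane and then invoke the Picard--Lindel\"of existence-and-uniqueness theorem. Introducing $v = \tfrac{1}{\kappa}\,w'$ so that $w' = \kappa v$, the equation becomes
\begin{eqnarray*}
w' &=& \kappa(s)\, v,\\
v' &=& -\kappa(s)\, w + \tau(s)\sqrt{1 - w^2 - v^2}.
\end{eqnarray*}
Writing $X = (w,v)$ and $F(s,X) = \bigl(\kappa(s)v,\; -\kappa(s)w + \tau(s)\sqrt{1 - w^2 - v^2}\bigr)$, the initial-value problem is $X' = F(s,X)$ with $X(s_1) = (w_1,v_1)$. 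Note that the admissibility condition $w_1^2 + v_1^2/\kappa_0^2 < 1$ guarantees, since $\kappa_0 \le \kappa(s)$ forces $v_1^2 \le v_1^2/\kappa_0^2$ only when $\kappa_0 \ge 1$, so more carefully I would observe that the stated open disk lies inside the unit disk $\{w^2 + v^2 < 1\}$ precisely where the square root is defined and positive; hence the starting point sits in the interior of the domain of $F$.

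The key steps, in order, are as follows. First I would fix the open set $\Omega = (c,d) \times D$, where $D = \{(w,v) : w^2 + v^2 < 1\}$ is the open unit disk, and verify that $F$ is continuous on $\Omega$: this follows because $\kappa \in C^1$ and $\tau \in C^0$ are continuous, and $\sqrt{1 - w^2 - v^2}$ is continuous on $D$. Second, I would check that $F$ is locally Lipschitz in the spatial variable $X$ uniformly in $s$ on compact subsets of $\Omega$. The linear terms $\kappa v$ and $-\kappa w$ are clearly Lipschitz with constant controlled by $\max_{[c,d]}\kappa$. For the nonlinear term, on any compact subset $K \subset D$ the function $g(w,v) = \sqrt{1 - w^2 - v^2}$ has bounded partial derivatives $\partial_w g = -w/\sqrt{1-w^2-v^2}$ and $\partial_v g = -v/\sqrt{1-w^2-v^2}$, because on $K$ the radicand is bounded below by a positive constant; hence $g$ is $C^1$ on the interior of $D$ and Lipschitz on $K$. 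Multiplying by the bounded continuous factor $\tau(s)$ preserves this, so $F(s,\cdot)$ is Lipschitz on $K$ with constant uniform in $s \in [c,d]$.

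With continuity and the local Lipschitz estimate established, the Picard--Lindel\"of theorem applies: there is an interval $J \subset (c,d)$ open and containing $s_1$ on which the integral equation
\begin{equation*}
X(s) = (w_1,v_1) + \int_{s_1}^{s} F(t, X(t))\, dt
\end{equation*}
has a unique continuous solution, obtained as the uniform limit of the Picard iterates on a small closed subinterval; this solution is automatically $C^1$, and differentiating recovers the system, hence \eqref{Ecuacion}. Uniqueness on the common interval follows from Gronwall's inequality applied to the difference of two solutions. I expect the main obstacle to be the Lipschitz verification near the boundary of $D$: the square root fails to be Lipschitz as $w^2+v^2 \to 1$, so the argument must stay on a compact set strictly interior to $D$, which is exactly why the theorem only claims a solution on \emph{some} interval $J$ rather than all of $(c,d)$ --- one chooses the closed rectangle or disk around $(w_1,v_1)$ small enough that it avoids the unit circle, and the standard existence time then depends on the bound for $F$ and the Lipschitz constant there.
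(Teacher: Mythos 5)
Your overall strategy coincides with the paper's: reduce the scalar equation to a first-order planar system and invoke Picard--Lindel\"{o}f. Your substitution $v=\frac{1}{\kappa}w'$ is in fact cleaner than the paper's choice $v=w'$: the paper's vector field carries a $\frac{\kappa'}{\kappa}$ term and is defined on the elliptical region $\{w^2+v^2/\kappa_0^2<1\}$, whereas yours involves no derivative of $\kappa$ and lives on the unit disk. Your continuity and local-Lipschitz verifications on compact subsets of the open unit disk are correct, as is the appeal to Picard--Lindel\"{o}f.

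However, there is a genuine error in your handling of the initial data, and as written the argument fails whenever $\kappa_0>1$. Under your substitution the initial condition $w'(s_1)=v_1$ becomes $v(s_1)=\frac{1}{\kappa(s_1)}w'(s_1)=v_1/\kappa(s_1)$, not $v(s_1)=v_1$. You instead impose $X(s_1)=(w_1,v_1)$ and then try to justify that this point lies in the domain of $F$ by claiming that the ellipse $\{w^2+v^2/\kappa_0^2<1\}$ is contained in the unit disk. That containment holds only when $\kappa_0\le 1$: for $\kappa_0=2$ the admissible datum $(w_1,v_1)=(0,3/2)$ satisfies $w_1^2+v_1^2/\kappa_0^2=9/16<1$ yet lies outside the unit disk, so $F(s_1,w_1,v_1)$ is not even defined. (Your parenthetical inequality is also reversed: $v_1^2\le v_1^2/\kappa_0^2$ holds precisely when $\kappa_0\le 1$, not $\kappa_0\ge 1$.) Moreover, even when $(w_1,v_1)$ happens to lie in the disk, the solution of your system with that datum has $w'(s_1)=\kappa(s_1)v(s_1)=\kappa(s_1)v_1\ne v_1$ in general, so it solves the wrong initial-value problem. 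The repair is one line: take $X(s_1)=(w_1,\,v_1/\kappa(s_1))$ and note that $\kappa(s_1)\ge\kappa_0$ gives $w_1^2+v_1^2/\kappa(s_1)^2\le w_1^2+v_1^2/\kappa_0^2<1$, which is exactly what the elliptical hypothesis in the lemma is designed to guarantee. With that correction the rest of your proof goes through, and your formulation has the small advantage over the paper's that it never differentiates $\kappa$, so continuity of $\kappa$ would suffice for this lemma.
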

\begin{proof}
Writing the equation \ref{Ecuacion} as:
\begin{equation*}
w''=\frac{\kappa'}{\kappa}w'-\kappa^2w+\tau\sqrt{\kappa^2(1-w^2)-(w')^2}
\end{equation*}
 and introducing the dependent variable, $v=w'$.
It follows that the equation can now be rewritten in the form
\begin{equation*}
(w,v)'=( v,\frac{\kappa'}{\kappa}v-\kappa^2w+\tau\sqrt{\kappa^2(1-w^2)-v^2}).
\end{equation*}Note that the vector field
\begin{equation*}
F(s,w,v)=( v,\frac{\kappa'}{\kappa}v-\kappa^2w+\tau\sqrt{\kappa^2(1-w^2)-v^2}),
\end{equation*} where $(s,w,v)\in\mathfrak{I}=(c,d)\times\{(w,v)\in \R^2\mid w^2+\frac{v^2}{\kappa_0^2}<1\}$ is a continous fuction and Lipschitziana with respect to $(w,v)$ in a neighborhood $D$ of\\ $(s_1,w_1,v_1)\in \mathfrak{I}.$
\\In fact, the function $F$ is well defined, since $ w^2+\frac{v^2}{\kappa_0^2}<1$ implies\\ $0<\kappa^2(1-w^2)-v^2$, and $F$ is continous;
and it is clear that the
partial derivates \begin{equation*}\frac{\partial F_1}{\partial w},\frac{\partial F_1}{\partial v},\frac{\partial F_2}{\partial w}\ \text{and} \ \frac{\partial F_2}{\partial v} \end{equation*} they are continuous, since
\begin{eqnarray*}
\frac{\partial F_1}{\partial w}(s,w,v)&=&0 \\
\frac{\partial F_1}{\partial v}(s,w,v)&=&1 \\
\frac{\partial F_2}{\partial w}(s,w,v)&=&-\kappa^2-\frac{\tau\kappa^2w}{\sqrt{\kappa^2(1-w^2)-v^2}}\\
\frac{\partial F_2}{\partial v}(s,w,v)&=&\frac{\kappa'}{\kappa}-\frac{\tau v }{\sqrt{\kappa^2(1-w^2)-v^2}},
\end{eqnarray*}therefore $F$ is continous and Lipschitziana with respect to $(w,v)$ in a neighborhood $D$ of $(s_1,w_1,v_1)\in \mathfrak{I}.$\\
And this implies by the $Picard-Lindel\ddot{o}f 's\ theorem$ that the problem has a unique solution on some interval open $J\subset (c,d)$ containing $s_1$.
\end{proof}
\begin{theorem}
Given differentiable function $\kappa(s)>0$ and the continous function $\tau(s)$, $s\in (a,b)$, there exists a regular parametrized curve $\alpha:J\rightarrow \R^3$ such that $s$ is the arc length, $\kappa(s)$ is the curvature, and $\tau(s)$ is the torsion of $\alpha.$ Moreover, any other curve $\widetilde{\alpha}$, satisfying the same conditions, differs from $\alpha$ by a rigid motion; that is, there exists an orthogonal linear map $\rho$ of $\R^3,$ with positive determinant, and a vector $c$ such that $\widetilde{\alpha}=\rho\circ\alpha +c.$
\end{theorem}
\begin{proof}
Let's find an $\alpha$ curve, parameterized by arc length $s$, such that its curvature $\kappa_{\alpha}$ is equal to $\kappa$ and its torsion $\tau_{\alpha}$ is equal to $\tau.$
Let's write its tangent vector $\textbf{t}=\textbf{t}(s)$ in polar coordinates,
\begin{equation*}
\textbf{t} =(\sin \phi\cos \theta,\sin \phi\sin \theta,\cos \phi).
\end{equation*}Therefore, its normal vector $\textbf{n}=\textbf{n}(s)$ and its binormal vector $\textbf{b}=\textbf{b}(s)$ are given by
\begin{eqnarray*}\textbf{n}&=&
(\frac{\phi'\cos \phi\cos \theta-\theta'\sin \phi\sin \theta}{\kappa},\frac{ \phi'\cos \phi\sin \theta +\theta'\sin\phi\cos \theta}{\kappa},\frac{-\phi'\sin \phi}{\kappa})
\\ \textbf{b}&=&(\frac{-\phi'\sin \theta}{\kappa}-\frac{\theta'\sin 2\phi\cos\theta}{2\kappa},\frac{-\phi'\cos \theta}{\kappa}-\frac{\theta'\sin 2\phi\sin \theta}{2\kappa},\frac{\theta'\sin^2 \phi}{\kappa}).\end{eqnarray*}
It is known that its Frenet trihedron $\textbf{t},\textbf{n},\textbf{b}$ forms an orthonormal basis of $\R^3$ and satisfies
\begin{eqnarray*}
\frac{d\textbf{t}}{ds}&=&\kappa \textbf{n}\\
\frac{d\textbf{n}}{ds}&=&-\kappa\textbf{t}+\tau\textbf{b}\\
\frac{d\textbf{b}}{ds}&=&-\tau \textbf{n}.
\end{eqnarray*}Therefore, for $w=<\textbf{t},D>$,where $D$ is a fixed unit vector, we have.
\begin{equation*}
\frac{dw}{ds}=<\frac{d\textbf{t}}{ds},D>=<\kappa \textbf{n},D>=\kappa<\textbf{n},D>,
\end{equation*} and $\displaystyle\frac{1}{\kappa}\frac{dw}{ds}=<\textbf{n},D>.$\ This implies
\begin{eqnarray*}
\frac{d}{ds}\{\frac{1}{\kappa}\frac{dw}{ds}\}&=&<\frac{d\textbf{n}}{ds},D>=<-\kappa\textbf{t}+\tau\textbf{b},D>\\&=&
-\kappa<\textbf{t},D>+\tau<\textbf{b},D>\\&=&-\kappa <\textbf{t},D>\pm\tau\sqrt{1-<\textbf{t},D>^2-<\textbf{n},D>^2}\\&=&-\kappa w\pm\tau\sqrt{1-w^2-(\frac{1}{\kappa}\frac{dw}{ds})^2}.
\end{eqnarray*}
Since for any curve $\alpha$ there exists an orthogonal linear function $\sigma$ of $\R^3$, with positive determinant such that the binormal vector $\textbf{b}$ of $\sigma\circ\alpha$ satisfies $<\textbf{b},(0,0,1)>0$ in an neighborhood of $s_0\in (a,b)$
and knowing that the curvature $\kappa$ and the torsion $\tau$ are invariant given a rigid movement. We can assume from the beginning that $D=(0,0,1)$ and the binormal vector $\textbf{b}$ of $\alpha$ satisfies $<\textbf{b},(0,0,1)>0$ in an neighborhood of $s_0\in (a,b)$.\\
Therefore, we can consider the initial value problem:
\begin{equation}\label{Ecuacion de segundo orden}
\frac{d}{ds}\{\frac{1}{\kappa}\frac{dw}{ds}\}=-\kappa w+\tau\sqrt{1-w^2-(\frac{1}{\kappa}\frac{dw}{ds})^2},
\end{equation} \begin{eqnarray*}
w(s_0)&=&w_0,\\
w'(s_0)&=&v_0,
\end{eqnarray*}where $s_0\in (a+\epsilon,b-\epsilon)$, $(w_0,v_0)\in\{(w,v)\in \R^2\mid w^2+\frac{v^2}{\kappa_0^2}<1\}$, and \\$\kappa_0=\text{min}\{\kappa(s)\mid s\in [a+\epsilon,b-\epsilon]\}$, for some $\epsilon>0$.\\
Which by Lemma \ref{teorema de existencia y unicidad}, has a unique solution $\xi=\xi(s)$ on some interval open $J\subset (a,b)$ containing $s_0$.
\\Now, given that $\cos \phi=<\textbf{t},(0,0,1)>$ and $\displaystyle\frac{\theta'\sin^2 \phi}{\kappa}=<\textbf{b},(0,0,1)>,$
 we have that there are functions $\widetilde{\phi}$ and $\widetilde{\theta}$, such that
\begin{eqnarray*}
\widetilde{\phi}&=&\arccos{<\textbf{t},(0,0,1)>}=\arccos{\xi},\\
\widetilde{\theta}&=&\int<\textbf{b},(0,0,1)>ds=\int{\frac{\kappa \sqrt{1-\xi^2-(\frac{1}{\kappa}\frac{d\xi}{ds})^2}}{\sin^2\widetilde{\phi}}ds}
\\&=&\int{\frac{\kappa\sqrt{1-\xi^2-(\frac{1}{\kappa}\frac{d\xi}{ds})^2}}{1-\xi^2}ds}
\end{eqnarray*}
By replacing these functions in the tangent vector $\textbf{t}$, given in polar coordinates, we find an $\alpha$ curve, given by $\alpha(s)=(x(s),y(s),z(s))$, where
\begin{eqnarray*}
x(s)&=&\int{\sqrt{1-\xi^2}\cos(\int{\frac{\kappa \sqrt{1-\xi^2-(\frac{1}{\kappa}\frac{d\xi}{ds})^2}}{1-\xi^2}ds})ds}\\
y(s)&=&\int{\sqrt{1-\xi^2}\sin(\int{\frac{\kappa\sqrt{1-\xi^2-(\frac{1}{\kappa}\frac{d\xi}{ds})^2}}{1-\xi^2}}ds})ds\\
z(s)&=&\int {\xi} ds.
\end{eqnarray*}This curve is parameterized by arc length $s$, its curvature is $\kappa_{\alpha}=\kappa$ and its torsion is $\tau_{\alpha}=\tau$.
In effect, using the curvature formula and the torsion formula, we have
\begin{eqnarray*}
\kappa_{\alpha}&=&\mid\mid \alpha''\mid \mid=\sqrt{\frac{\xi^2(\xi')^2}{1-\xi^2}+\frac{\kappa^2(1-\xi^2-(\frac{1}{\kappa}\frac{d\xi}{ds})^2)}{1-\xi^2}+(\xi')^2}=\kappa\\
\tau_{\alpha}&=&\frac{\alpha'\wedge \alpha''\cdot \alpha'''}{\mid\mid\alpha'\wedge\alpha''\mid \mid^2}=\frac{\xi\kappa^3-\xi'\kappa'+\kappa\xi''}{\kappa\sqrt{(1-\xi^2)\kappa^2-(\xi')^2}}=\frac{\frac{d}{ds}\{\frac{1}{\kappa}\frac{d\xi}{ds}\}+\kappa \xi}{\sqrt{1-\xi^2-(\frac{1}{\kappa}\frac{d\xi}{ds})^2}}=\tau.
\end{eqnarray*}
Now, suppose that $\alpha=\alpha(s)$ is a curve parametrized by arc lenth $s$, where\\ $\kappa=\kappa(s)$ is its curvature, $\tau=\tau(s)$ is its torsion and consider the canonical basis $\{\textbf{e}_i\mid i=1,2,3\}$ of $R^3.$  And let $\textbf{t}_{0},\textbf{n}_{0},\textbf{b}_{0}$ be the Frenet trihedron at $s=s_0\in I$ of $\alpha$. Clearly, there exists an orthogonal linear map $\sigma$ of $R^3,$ with positive determinant such that the scalar product
$<\sigma\circ\textbf{b}_{0},\textbf{e}_i>$ is greater than zero, for all $i=1,2,3.$\\ To each value of the parameter $s$ of the curve $\beta=\sigma\circ \alpha+c^*$, we have associated the Frenet trihedron at $s$:\begin{equation*}\textbf{t}_{\beta}=\textbf{t}_{\beta}(s),\textbf{n}_{\beta}=\textbf{n}_{\beta}(s),\textbf{b}_{\beta}=\textbf{b}_{\beta}(s).
\end{equation*}Thus
Frenet trihedron forms an orthonormal basis of $\R^3$ and satisfies:
\begin{equation*}
1=<\textbf{t}_{\beta},\textbf{e}_i>^2+<\textbf{n}_{\beta},\textbf{e}_i>^2+<\textbf{b}_{\beta},\textbf{e}_i>^2 ,
\end{equation*}for all $i=1,2,3$ and
\begin{eqnarray*}
\frac{d\textbf{t}_{\beta}}{ds}&=&\kappa \textbf{n}_{\beta}\\
\frac{d\textbf{n}_{\beta}}{ds}&=&-\kappa\textbf{t}_{\beta}+\tau\textbf{b}_{\beta}\\
\frac{d\textbf{b}_{\beta}}{ds}&=&-\tau \textbf{n}_{\beta}.
\end{eqnarray*}
It is clear, because of the continuity of the $\beta$ curve, that exists a neighborhood $N$ of $s_0\in I$ such that the binormal vector $\textbf{b}_{\beta}=\textbf{b}_{\beta}(s)$ of the $\beta$ curve satisfies that its scalar product $<\textbf{b}_{\beta},e_i>$ is greater than zero, for all $i=1,2,3.$
Hence
\begin{eqnarray*}
\frac{d}{ds}<\textbf{n}_{\beta},\textbf{e}_i>&=&<\frac{d\textbf{n}_{\beta}}{ds},\textbf{e}_i>\\
&=&<-\kappa\textbf{t}_{\beta}+\tau\textbf{b}_{\beta},\textbf{e}_i>\\
&=&-\kappa<\textbf{t}_{\beta},\textbf{e}_i>+\tau<\textbf{b}_{\beta},\textbf{e}_i>,
 \end{eqnarray*}this is
\begin{equation*}
\frac{d}{ds}(\frac{1}{\kappa}\frac{d}{ ds}<\textbf{t}_{\beta},\textbf{e}_i>)=-\kappa<\textbf{t}_{\beta},\textbf{e}_i>+\tau\sqrt{1-<\textbf{t}_{\beta},\textbf{e}_i>^2-(\frac{1}{\kappa}\frac{d}{ ds}<\textbf{t}_{\beta},\textbf{e}_i>)^2}.
\end{equation*}
This implies that the components $<\textbf{t}_{\beta},\textbf{e}_i>$ of the tangent vector $\textbf{t}_{\beta}$ of the curve $\beta$ satisfies the initial value problem
\begin{eqnarray}\label{Problemas de valores iniciales}
\frac{d}{ds}\{\frac{1}{\kappa}\frac{dw}{ds}\}&=&-\kappa w+\tau\sqrt{1-w^2-(\frac{1}{\kappa}\frac{dw}{ds})^2}\\
w(s_0)&=&<\sigma\circ \textbf{t}_0,\textbf{e}_i>\\
w'(s_0)&=&<\kappa(s_0)\sigma\circ \textbf{n}_0,\textbf{e}_i>,
\end{eqnarray} in some neighborhoods $I_i\subset N$, of $s_0\in I$, for $i=1,2,3$, respectively.\\
Now, suppose there is another $\widetilde{\alpha}$ curve such that its curvature $\widetilde{\kappa}(s)$ is equal to $\kappa(s)$ and its torsion $\widetilde{\tau}(s)$ is equal to $\tau(s)$, where $s\in I.$\\ Let $\widetilde{\textbf{t}}_{0},\widetilde{\textbf{n}}_{0},\widetilde{\textbf{b}}_{0}$ be the Frenet trihedron at $s=s_0\in I$ of $\widetilde{\alpha}$. Then, there exists an orthogonal linear map $\pi$ of $\R^3,$ with positive determinant such that
\begin{eqnarray*}
\pi\circ \widetilde{\textbf{t}}_{0}&=&\sigma\circ\textbf{t}_0\\
\pi\circ \widetilde{\textbf{n}}_{0}&=&\sigma\circ\textbf{n}_0\\
\pi\circ \widetilde{\textbf{b}}_{0}&=&\sigma\circ\textbf{b}_0.
\end{eqnarray*}
And consider the  Frenet trihedron $\textbf{t}_{\gamma},\textbf{n}_{\gamma},\textbf{b}_{\gamma}$ of the curve $\gamma=\pi\circ \widetilde{\alpha}+\widetilde{c}.$
Therefore, there is a neighborhood $M$ of $s_0\in I$ such that the binormal vector $\textbf{b}_{\gamma}=\textbf{b}_{\gamma}(s)$ of the $\gamma$ curve satisfies that its scalar product $<\textbf{b}_{\gamma},\textbf{e}_i>$ is greater than zero, for all $i=1,2,3.$
This implies that the components $<\textbf{t}_{\gamma},\textbf{e}_i>$ of the tangent vector $\textbf{t}_{\gamma}$ of the curve $\gamma$ satisfies the initial value problem \ref{Problemas de valores iniciales}, in some neighborhoods $J_i\subset M$, of $s_0\in I$, for $i=1,2,3$, respectively.\\
Therefore, since the solution to the initial value problem is unique, we have to
\begin{equation*}
\textbf{t}_{\beta}(s)=\textbf{t}_{\gamma}(s),
\end{equation*}or all $s\in (\bigcap_{i=1}^3I_i)\cap (\bigcap_{i=1}^3J_i).$
Therefore there exists an orthogonal linear map $\rho$ of $\R^3,$ with positive determinant, and a vector $c$ such that $\widetilde{\alpha}(s)=\rho\circ\alpha(s) +c,$ for all $s\in (\bigcap_{i=1}^3I_i)\cap (\bigcap_{i=1}^3J_i).$
\end{proof}
\subsection{Some observations}

\begin{remark}
Let $\kappa:[a,b]\rightarrow \R$ be a function always positive of class $C^1$, and\\ $\tau:[a,b]\rightarrow \R$ be a function of class $C^0$.
\\If $\xi=\xi(s)$ is a solution of
\begin{equation*}
\frac{d}{ds}\{\frac{1}{\kappa}\frac{dw}{ds}\}=-\kappa w+\tau\sqrt{1-w^2-(\frac{1}{\kappa}\frac{dw}{ds})^2},
\end{equation*} \begin{eqnarray*}
w(s_0)&=&w_0,\\
w'(s_0)&=&v_0,
\end{eqnarray*}where $s_0\in (a+\epsilon,b-\epsilon)$, $(w_0,v_0)\in\{(w,v)\in \R^2\mid w^2+\frac{v^2}{\kappa_0^2}<1\}$, and \\$\kappa_0=\text{min}\{\kappa(s)\mid s\in [a+\epsilon,b-\epsilon]\}$, for some $\epsilon>0$.\\then $\alpha(s)=(x(s),y(s),z(s))$, where
\begin{eqnarray*}
x(s)&=&\int{\sqrt{1-\xi^2}\cos(\int{\frac{\sqrt{(1-\xi^2)\kappa^2-(\xi')^2}}{1-\xi^2}ds})ds}\\
y(s)&=&\int{\sqrt{1-\xi^2}\sin(\int{\frac{\sqrt{(1-\xi^2)\kappa^2-(\xi')^2}}{1-\xi^2}ds})ds}\\
z(s)&=&\int {\xi} ds,
\end{eqnarray*}is a curve parametrized by arc length $s$, $\kappa=\kappa(s)$ is the curvature, and $\tau=\tau(s)$ is the torsion of $\alpha$.\\
Reciprocally, let $\alpha:I\rightarrow \R^3$ be a curve parametrized by arc lenth $s$, where $\kappa=\kappa(s)$ is its curvature and $\tau=\tau(s)$ is its torsion and let $\textbf{t}_{0},\textbf{n}_{0},\textbf{b}_{0}$ be the Frenet trihedron at $s=s_0\in I$ of $\alpha$ and consider the canonical basis $\{\textbf{e}_i\mid i=1,2,3\}$ of $R^3.$\\  Then, there exists an orthogonal linear map $\sigma$ of $\R^3,$ with positive determinant such that the components $<\textbf{t}_{\beta}(s),\textbf{e}_i>$ of the tangent vector $\textbf{t}_{\beta}$ of the curve $\beta=\sigma\circ \alpha$ satisfies the initial value problem
\begin{eqnarray*}
\frac{d}{ds}\{\frac{1}{\kappa}\frac{dw}{ds}\}&=&-\kappa w+\tau\sqrt{1-w^2-(\frac{1}{\kappa}\frac{dw}{ds})^2}\\
w(s_0)&=&<\sigma\circ \textbf{t}_0,\textbf{e}_i>\\
w'(s_0)&=&<\kappa(s_0)\sigma\circ \textbf{n}_0,\textbf{e}_i>,
\end{eqnarray*} in some neighborhoods $I_i$ of $s_0\in I$, for $i=1,2,3$, respectively
\end{remark}
\section{Applications}
\begin{definition}
A curve $\alpha$ with $\kappa(s)\neq 0$ is called a general helix if the principal tangent lines of $\alpha$ make a constant angle with a fixed direction. \cite{KuWo:06}
\end{definition}
\begin{theorem}
Let $\alpha$ be a unit speed space curve with $\kappa=\kappa(s)\neq 0$ and torsion $\tau=\tau(s).$ Then the following statements are equivalent:
\begin{enumerate}
\item  $\alpha$ is a general helix.
\item \begin{equation*}
 \frac{\tau}{\kappa}(s)=m , \  \text{(constant)}
 \end{equation*}
 \item $\alpha(s)=(x(s),y(s),z(s))$, where
 \begin{eqnarray*}
  x(s)&=&\frac{1}{\sqrt{1+m^2}}\int{\cos(\sqrt{1+m^2}\int{\kappa ds})}ds,\\
  y(s)&=&\frac{1}{\sqrt{1+m^2}}\int{\sin(\sqrt{1+m^2}\int{\kappa ds})}ds,\\
  z(s)&=&\frac{m s}{\sqrt{1+m^2}}.
 \end{eqnarray*}Any other curve, satisfying the same conditions, differs from $\alpha$ by a rigid movement.
\end{enumerate}
\end{theorem}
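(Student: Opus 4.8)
The plan is to prove the cyclic chain of implications $(1)\Rightarrow(2)\Rightarrow(3)\Rightarrow(1)$, and to read off the final uniqueness assertion directly from the fundamental theorem established above: any two curves in the list share the same curvature $\kappa$ and the same torsion $\tau=m\kappa$, hence differ by a rigid motion. So the only real work is the three implications, and of these the passage $(2)\Rightarrow(3)$ is where the machinery of the paper enters.

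For $(1)\Rightarrow(2)$ I would argue classically (Lancret). By definition there is a fixed unit vector $D$ with $\langle\mathbf{t},D\rangle=\cos\phi_0$ constant. Differentiating and using $\mathbf{t}'=\kappa\mathbf{n}$ gives $\kappa\langle\mathbf{n},D\rangle=0$, so $\langle\mathbf{n},D\rangle=0$ because $\kappa\neq0$. Hence $D$ lies in the plane spanned by $\mathbf{t}$ and $\mathbf{b}$, and orthonormality forces $\langle\mathbf{b},D\rangle=\pm\sin\phi_0$. Differentiating $\langle\mathbf{n},D\rangle=0$ and using $\mathbf{n}'=-\kappa\mathbf{t}+\tau\mathbf{b}$ then yields $-\kappa\cos\phi_0+\tau(\pm\sin\phi_0)=0$, whence $\tau/\kappa=\pm\cot\phi_0=:m$ is constant.

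The central step is $(2)\Rightarrow(3)$, and here I would exploit the nonlinear second–order equation \eqref{Ecuacion de segundo orden} together with the explicit integral formulas of the Remark. The key observation is that when $\tau=m\kappa$ the equation admits a \emph{constant} solution: substituting $w\equiv c$ makes the left–hand side vanish and forces $0=-\kappa c+m\kappa\sqrt{1-c^2}$, so $c=m/\sqrt{1+m^2}$; note $c^2=m^2/(1+m^2)<1$, so the initial data $(w_0,v_0)=(c,0)$ is admissible. Taking $\xi\equiv m/\sqrt{1+m^2}$ (hence $\xi'=0$ and $1-\xi^2=1/(1+m^2)$) in the Remark's formulas collapses the inner integrand to $\sqrt{(1-\xi^2)\kappa^2-(\xi')^2}/(1-\xi^2)=\kappa\sqrt{1+m^2}$ and the outer amplitude $\sqrt{1-\xi^2}$ to $1/\sqrt{1+m^2}$, producing exactly the three closed forms in $(3)$, with $z(s)=\int\xi\,ds=ms/\sqrt{1+m^2}$. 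By the Remark this curve has curvature $\kappa$ and torsion $\tau=m\kappa$; by the fundamental theorem the given $\alpha$ differs from it by a rigid motion, so up to such a motion $\alpha$ is given by $(3)$.

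Finally $(3)\Rightarrow(1)$ is immediate: the curve is unit speed, so $\mathbf{t}=\alpha'$ and $\langle\mathbf{t},(0,0,1)\rangle=z'(s)=m/\sqrt{1+m^2}$ is constant, meaning the tangent makes a constant angle with the fixed direction $(0,0,1)$; thus $\alpha$ is a general helix. I expect the main obstacle to be the bookkeeping inside $(2)\Rightarrow(3)$: pinning down that $\xi\equiv m/\sqrt{1+m^2}$ is the relevant constant solution (including the sign conventions inside the square roots) and verifying that its substitution truly reduces the double integrals of the Remark to the stated closed forms. The remaining implications are short Frenet computations.
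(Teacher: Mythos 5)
Your proposal is correct and follows essentially the same route as the paper: the decisive step $(2)\Rightarrow(3)$ identifies the constant solution $\xi\equiv m/\sqrt{1+m^2}$ of the second-order equation with $\tau=m\kappa$ and substitutes it into the integral representation $(x,y,z)$ of the curve, exactly as the paper does, and $(3)\Rightarrow(1)$ is the same observation that $\langle \textbf{t},(0,0,1)\rangle = z'(s)$ is constant. The only cosmetic difference is in $(1)\Rightarrow(2)$, where you unpack the classical Lancret--Frenet computation ($\langle \textbf{n},D\rangle=0$, then differentiate again) instead of plugging the constant $w=\langle \textbf{t},U\rangle=\delta$ into the paper's already-established identity $\frac{d}{ds}\{\frac{1}{\kappa}\frac{dw}{ds}\}=-\kappa w\pm\tau\sqrt{1-w^2-(\frac{1}{\kappa}\frac{dw}{ds})^2}$; these are the same calculation, since that identity is itself derived from the Frenet equations you use.
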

\begin{proof}
Assume property (1) holds, this is suppose that the curve $\alpha:I\rightarrow \R^3$, with curvature $\kappa(s)\neq 0$ and torsion $\tau(s)$ is an general helix and consider its Frenet trihedron $\textbf{t},\textbf{n},\textbf{b}$. Then the principal tangent lines of $\alpha$ make a constant angle with a fixed direction, this is there is a fixed unit vector $U$, such that\\ $<\textbf{t},U>=\delta$ (constant).
We know if $w=<\textbf{t},D>$, then
\begin{equation*}
\frac{d}{ds}\{\frac{1}{\kappa}\frac{dw}{ds}\}=-\kappa w\pm\tau\sqrt{1-w^2-(\frac{1}{\kappa}\frac{dw}{ds})^2},
\end{equation*} therefore, taking $D=U$, we have to
\begin{equation*}
0=-\kappa \delta\pm\tau\sqrt{1-\delta^2},
\end{equation*}this is
\begin{equation*}
\frac{\tau}{\kappa}=\pm\frac{\delta}{\sqrt{1-\delta^2}}
\end{equation*}
Now assume property (2) holds, this is consider the equation
\begin{equation*}
\frac{\tau}{\kappa}(s)=m, \text{(constant)}
\end{equation*}
Now replacing $\tau=m\kappa$ in the second order differential equation \ref{Ecuacion de segundo orden}, we have
\begin{equation*}
\frac{d}{ds}\{\frac{1}{\kappa}\frac{dw}{ds}\}=-\kappa w+\kappa m\sqrt{1-w^2-(\frac{1}{\kappa}\frac{dw}{ds})^2}.
\end{equation*}Considering the definition of the general helix, let's find a solution of the form $\xi(s)=\delta$, where $\delta$ is an constant and satisfies $1-\delta^2>0$.\\
Replacing in the equation above we find that
\begin{equation*}
0=-\kappa \delta +\kappa m\sqrt{1-\delta^2 }.\\
\end{equation*}This implies that $ \delta = \frac{m}{\sqrt{1+m^2}}$.\\
Therefore, $\xi(s)=\frac{m}{\sqrt{1+m^2}},$  is a solution of the non-linear second-order differential equation.
\begin{equation*}
\frac{d}{ds}\{\frac{1}{\kappa}\frac{dw}{ds}\}=-\kappa w+\kappa m\sqrt{1-w^2-(\frac{1}{\kappa}\frac{dw}{ds})^2},
\end{equation*}\\Let's calculate the  position vector of $\alpha$ using the expression
\begin{eqnarray*}
x(s)&=&\int{\sqrt{1-\xi^2}\cos(\int{\frac{\kappa\sqrt{1-\xi^2-(\frac{1}{\kappa}\frac{d\xi}{ds})^2}}{1-\xi^2}ds})ds}\\
y(s)&=&\int{\sqrt{1-\xi^2}\sin(\int{\frac{\kappa\sqrt{1-\xi^2-(\frac{1}{\kappa}\frac{d\xi}{ds})^2}}{1-\xi^2}ds})ds}\\
z(s)&=&\int {\xi} ds.
\end{eqnarray*}In effect,
\begin{equation*}
\int{\frac{\kappa\sqrt{1-\xi^2-(\frac{1}{\kappa}\frac{d\xi}{ds})^2}}{1-\xi^2}}ds=\sqrt{1+m^2}\int{\kappa ds}
\end{equation*}
Therefore, the components of the position vector are:
\begin{eqnarray*}
  x(s)&=&\frac{1}{\sqrt{1+m^2}}\int{\cos(\sqrt{1+m^2}\int{\kappa ds})}ds,\\
  y(s)&=&\frac{1}{\sqrt{1+m^2}}\int{\sin(\sqrt{1+m^2}\int{\kappa ds})}ds,\\
  z(s)&=&\frac{m s}{\sqrt{1+m^2}}
 \end{eqnarray*}
Now assume property (3) holds, a direct calculation shows that its tangent vector $\textbf{t}_{\alpha}$ satisfies
\begin{equation*}
<\textbf{t}_{\alpha}(s),(0,0,1)>=\frac{ m}{\sqrt{1+m^2}},
\end{equation*}this is the principal tangent lines of $\alpha$ make a constant angle with a fixed direction $U=(0,0,1).$\\
Now, if there is another curve that differs from $\alpha$ by a rigid movement, then it's tangent vector is given by $\rho \textbf{t}_{\alpha}$, where $\rho$ is an orthogonal linear map of $\R^3,$ with positive determinant. For this case we take as a fixed direction $\rho (0,0,1.)$
\end{proof}
\begin{definition}
A curve $\alpha$ with $\kappa(s)\neq 0$ is called a slant helix if the principal normal lines of $\alpha$ make a constant angle with a fixed direction. \cite{IzTa:04}
\end{definition}
\begin{theorem}
Let $\alpha$ be a unit speed space curve with $\kappa=\kappa(s)\neq 0$ and torsion $\tau=\tau(s).$ Then the following statements are equivalent:
\begin{enumerate}
\item  $\alpha$ is a slant helix.
\item \begin{equation*}
 \sigma(s)=(\frac{\kappa^2}{(\kappa^2+\tau^2)^{3/2}}(\frac{\tau}{\kappa})')(s)
 \end{equation*}is a constant function.
 \item $\alpha(s)=(x(s),y(s),z(s))$, where
\begin{eqnarray*}
x(s)&=&\frac{1}{\sqrt{1+m^2}}\int(\int{\sin[\frac{\sqrt{1+m^2}\arccos(m\int^s_0\kappa ds)}{m}]\kappa(s)}ds)ds\\
y(s)&=&\frac{1}{\sqrt{1+m^2}}\int(\int{\cos[\frac{\sqrt{1+m^2}\arccos(m\int^s_0\kappa ds)}{m}]\kappa(s)}ds)ds\\
z(s)&=&\frac{\mid m\mid}{\sqrt{1+m^2}}\int(\int^s_0\kappa ds)ds.
\end{eqnarray*}
\end{enumerate}
\end{theorem}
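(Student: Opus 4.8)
The plan is to prove the three equivalences through the cycle $(1)\Rightarrow(2)\Rightarrow(3)\Rightarrow(1)$, following the same scheme used above for general helices but organized around the normal vector $\textbf{n}$ instead of the tangent vector $\textbf{t}$. For $(1)\Rightarrow(2)$ I would argue as in the general helix case: if $\alpha$ is a slant helix there is a fixed unit vector $U$ with $\langle\textbf{n},U\rangle=c_0$ constant, and taking $D=U$ in the identity $\frac{1}{\kappa}\frac{dw}{ds}=\langle\textbf{n},D\rangle$ established in the fundamental theorem shows that $\frac{1}{\kappa}w'=c_0$ is constant. Hence the left side of the second order equation vanishes and gives $\kappa w=\tau p$, where $w=\langle\textbf{t},U\rangle$ and $p=\langle\textbf{b},U\rangle=\pm\sqrt{1-w^2-c_0^2}$. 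Differentiating this relation and substituting $w'=\kappa c_0$ and $p'=-\tau c_0$ from the Frenet equations, I would solve the resulting linear system in $w,p$ to obtain $p\,\kappa(\frac{\tau}{\kappa})'=c_0(\kappa^2+\tau^2)$; inserting this into $w^2+p^2=1-c_0^2$ and using $\kappa\tau'-\kappa'\tau=\kappa^2(\frac{\tau}{\kappa})'$ collapses everything to $\sigma^2=\frac{c_0^2}{1-c_0^2}$, so $\sigma$ is constant.

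For $(2)\Rightarrow(3)$ I would work through the tangent indicatrix. Reparametrizing $\textbf{t}(s)$ by its own arc length $\tilde s$ (so $d\tilde s=\kappa\,ds$) gives a unit speed spherical curve whose unit tangent is $\textbf{n}$ and whose curvature is $\tilde\kappa=\sqrt{1+(\tau/\kappa)^2}$. Because its tangent is $\textbf{n}$, the slant helix condition is precisely that $\textbf{t}$ is a general helix, and $\sigma=m$ constant translates into $\textbf{t}$ having constant slope $m$. Integrating $\sigma=m$ first gives $\frac{\tau}{\sqrt{\kappa^2+\tau^2}}=m\int_0^s\kappa\,ds$ (with the base point chosen so the integration constant vanishes), hence $\tilde\kappa=(1-m^2(\int_0^s\kappa\,ds)^2)^{-1/2}$. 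I would then feed $\tilde\kappa$ and the slope $m$ into part $(3)$ of the previous theorem to obtain the components of $\textbf{t}$ as $\cos$ and $\sin$ of the phase $\sqrt{1+m^2}\int\tilde\kappa\,d\tilde s=\frac{\sqrt{1+m^2}}{m}\arccos(m\int_0^s\kappa\,ds)$ (up to an additive constant absorbed by a rotation). Finally $\alpha=\int\textbf{t}\,ds$: the substitution $d\tilde s=\kappa\,ds$ supplies the inner factor $\kappa$, the reconstruction of $\textbf{t}$ is the inner integral and the integration $\int\textbf{t}\,ds$ is the outer one, which is exactly the double-integral form of $x$ and $y$; the third component collapses to $z(s)=\frac{|m|}{\sqrt{1+m^2}}\int(\int_0^s\kappa\,ds)\,ds$.

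The implication $(3)\Rightarrow(1)$ is a direct verification, parallel to the general helix case treated above. Starting from the explicit $\alpha=(x,y,z)$ I would compute $\textbf{t}=\alpha'$ and then $\textbf{n}=\frac{1}{\kappa}\alpha''$, and check that $\langle\textbf{n},(0,0,1)\rangle=\frac{|m|}{\sqrt{1+m^2}}$ is constant; thus the principal normal lines of $\alpha$ make a constant angle with the fixed direction $(0,0,1)$, so $\alpha$ is a slant helix. As in the general helix theorem, if a curve differs from $\alpha$ by a rigid motion $\rho$ one takes $\rho(0,0,1)$ as the fixed direction.

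I expect the main obstacle to be the explicit evaluation inside $(2)\Rightarrow(3)$. Unlike the general helix, where the relevant quantity is constant and the phase integral is immediate, here the data of the problem only becomes transparent after passing to the tangent indicatrix, and producing the closed form for $x$ and $y$ — in particular the appearance of $\arccos(m\int_0^s\kappa\,ds)$ together with the second, outer integration — requires careful handling of the reparametrization between $s$ and the indicatrix arc length $\tilde s$, of the induced curvature $\tilde\kappa$, and of the constant phase absorbed by the rotation. By contrast the equivalence $(1)\Leftrightarrow(2)$ and the verification $(3)\Rightarrow(1)$ are comparatively routine Frenet computations.
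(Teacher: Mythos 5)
Your overall cycle $(1)\Rightarrow(2)\Rightarrow(3)\Rightarrow(1)$ matches the paper, and your $(1)\Rightarrow(2)$ and $(3)\Rightarrow(1)$ are essentially the paper's own arguments (direct Frenet computations; note that your version of $(1)\Rightarrow(2)$ only yields $\sigma^2=c_0^2/(1-c_0^2)$, so a short continuity remark is still needed to conclude that $\sigma$ itself is constant). The real divergence is $(2)\Rightarrow(3)$: the paper integrates $\sigma=m$ to write $\tau=\kappa\,mK/\sqrt{1-m^2K^2}$ with $K=\int_0^s\kappa\,ds$, exhibits $\xi=\frac{m}{\sqrt{1+m^2}}K$ as an explicit solution of its second-order ODE, and then evaluates the phase integral in closed form (an $\arccos$ plus an $\arctan$ correction) before reorganizing everything into the double integrals. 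You instead reduce to the general-helix theorem via the tangent indicatrix; this is attractive because it replaces the paper's messiest computation by an appeal to an already-proved result.

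However, as written your reduction has two genuine gaps. First, the sentence ``$\sigma=m$ translates into $\textbf{t}$ having constant slope $m$'' is precisely the claim that the tangent indicatrix $\beta(\tilde s)=\textbf{t}(s)$, $d\tilde s=\kappa\,ds$, satisfies $\tilde\tau/\tilde\kappa=\sigma$. You compute $\tilde\kappa=\sqrt{1+(\tau/\kappa)^2}$ but never $\tilde\tau$; the identity $\tilde\tau=\dfrac{(\tau/\kappa)'}{\kappa\,(1+(\tau/\kappa)^2)}$, whence $\tilde\tau/\tilde\kappa=\sigma$, is the crux of the whole reduction (it is exactly what makes $\sigma$ the right invariant), and it must be proved --- a short computation with $\beta'\wedge\beta''\cdot\beta'''$, but without it the appeal to part (3) of the general-helix theorem is unjustified. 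Second, that theorem determines a curve only up to a rigid motion, so it gives $\textbf{t}=\rho\circ\gamma+c$ with $\gamma$ the standard form; integrating yields $\alpha=\rho\circ\int\gamma\,ds+cs+d$, and the translation $c$ produces a drift term $cs$ that is \emph{not} part of a rigid motion of the claimed double-integral form. You absorb the rotational constant (the phase) but never address $c$: one must argue $c=0$, for instance by observing that $\textbf{t}$ lies on the unit sphere centered at the origin and that the integration constants in $\gamma$ can be chosen so that $\gamma$ does as well, forcing the connecting rigid motion to be linear. Both gaps are fillable, so your plan can be completed, but these two steps are where the actual content of $(2)\Rightarrow(3)$ lives.
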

\begin{proof}
Assume property (1) holds, this is suppose that the curve $\alpha:I\rightarrow \R^3$, with curvature $\kappa(s)\neq 0$ and torsion $\tau(s)$ is an slant helix and consider its Frenet trihedron $\textbf{t},\textbf{n},\textbf{b}$. Then the principal normal lines of $\alpha$ make a constant angle with a fixed direction, this is there is a fixed unit vector $U$, such that $<\textbf{n},U>=\delta$ (constant). \\We know if $w=<\textbf{t},D>$, then
\begin{equation*}
\frac{d}{ds}\{\frac{1}{\kappa}\frac{dw}{ds}\}=-\kappa w\pm\tau\sqrt{1-w^2-(\frac{1}{\kappa}\frac{dw}{ds})^2},
\end{equation*} therefore, taking $D=U$, we have to
\begin{equation*}
\kappa w=\pm\tau\sqrt{1-w^2-\delta^2},
\end{equation*}therefore $w^2$ is given by
\begin{equation*}
w^2=\frac{(1-\delta^2)(\frac{\tau}{\kappa})^2}{1+(\frac{\tau}{\kappa})^2}.
\end{equation*}
Deriving with respect to $s$ and since $w$ is equal to $\delta\int^s_{0}\kappa$, we have to
\begin{equation*}
\frac{\delta}{\sqrt{1-\delta^2}}=\frac{(\frac{\tau}{\kappa})'}{\kappa(1+(\frac{\tau}{\kappa})^2)^{3/2}}.
\end{equation*}
Now assume property (2) holds, this is consider the equation
\begin{equation*}
\frac{\kappa^2}{(\kappa^2+\tau^2)^{3/2}}(\frac{\tau}{\kappa})'=
\frac{(\frac{\tau}{\kappa})'}{\kappa(1+(\frac{\tau}{\kappa})^2)^{3/2}}=m\end{equation*}
The torsion $\tau$ can be expressed as:
\begin{equation*}
\tau=\kappa(\frac{(m\int^s_{0}\kappa ds+A)^2}{1-(m\int^s_{0}\kappa ds+A)^2})^{1/2},
\end{equation*}where $A$ is an integration constant.
Now replacing $\tau$ in the second order differential equation \ref{Ecuacion de segundo orden}, we have
\begin{equation*}
\frac{d}{ds}\{\frac{1}{\kappa}\frac{dw}{ds}\}=-\kappa w-\kappa(\frac{(m\int^s_0\kappa ds+A)^2}{1-(m\int^s_0\kappa ds+A)^2})^{1/2}\sqrt{1-w^2-(\frac{1}{\kappa}\frac{dw}{ds})^2}.
\end{equation*}Considering the definition of the slant helix, let's find a solution of the form $\frac{1}{\kappa}\frac{d\xi}{ds}=\delta$,
this is $\xi=\delta\int^s _0\kappa ds,$ where $\delta$ is an constant and the $\xi$ function satisfies:\\
$1-\xi^2-(\frac{1}{\kappa}\frac{d\xi}{ds})^2>0$.\\
Replacing in the equation above we find that
\begin{equation*}
0=-\kappa (\delta \int^s_0 \kappa ds)+\kappa\frac{\mid(m\int^s_0\kappa ds+A)\mid}{\sqrt{1-(m\int^s_0\kappa ds+A)^2}}\sqrt{1-(\delta\int^s_0 \kappa ds)^2-\delta^2}\\
\end{equation*}If $m>0$, we take $\delta=\frac{ m }{\sqrt{1+m^2}}$, $A=0$ and we get
 \begin{eqnarray*}
 &&-\kappa (\delta \int^s_0 \kappa ds)+\kappa\frac{\mid(m\int^s_0\kappa ds)\mid}{\sqrt{1-(m\int^s_0\kappa ds)^2}}\sqrt{1-\delta^2}\sqrt{1-(\frac{\delta}{\sqrt{1-\delta^2}}\int^s_0 \kappa ds)^2}\\
&=&-\kappa (\delta \int^s_0 \kappa ds)+\kappa(m\int^s_0\kappa ds)\sqrt{1-\delta^2}
\\&=& 0,
\end{eqnarray*}
 respectively if $m<0$, we take $\delta=\frac{ -m }{\sqrt{1+m^2}}$, $A=0$ and we get
\begin{eqnarray*}
 &&-\kappa (\delta \int^s_0 \kappa ds)+\kappa\frac{\mid(m\int^s_0\kappa ds)\mid}{\sqrt{1-(m\int^s_0\kappa ds)^2}}\sqrt{1-\delta^2}\sqrt{1-(\frac{\delta}{\sqrt{1-\delta^2}}\int^s_0 \kappa ds)^2}\\
&=&-\kappa (\delta \int^s_0 \kappa ds)-\kappa(m\int^s_0\kappa ds)\sqrt{1-\delta^2}
\\&=& 0.
\end{eqnarray*}
Therefore, $\xi=\frac{m}{\sqrt{1+m^2}}\int^s _0\kappa ds,$ ($m>0$) is a solution of the non-linear second-order differential equation.
\begin{equation*}
\frac{d}{ds}\{\frac{1}{\kappa}\frac{dw}{ds}\}=-\kappa w-\kappa\frac{(m\int^s_0\kappa ds)}{\sqrt{1-(m\int^s_0\kappa ds)^2}}\sqrt{1-w^2-(\frac{1}{\kappa}\frac{dw}{ds})^2},
\end{equation*}
respectively $\xi=\frac{-m}{\sqrt{1+m^2}}\int^s _0\kappa ds,$ ($m<0$) is a solution of the non-linear second-order differential equation.
\begin{equation*}
\frac{d}{ds}\{\frac{1}{\kappa}\frac{dw}{ds}\}=-\kappa w+\kappa\frac{(m\int^s_0\kappa ds)}{\sqrt{1-(m\int^s_0\kappa ds)^2}}\sqrt{1-w^2-(\frac{1}{\kappa}\frac{dw}{ds})^2},
\end{equation*}\\Let's calculate the  position vector of $\alpha$ using the expression
\begin{eqnarray*}
x(s)&=&\int{\sqrt{1-\xi^2}\cos(\int{\frac{\kappa\sqrt{1-\xi^2-(\frac{1}{\kappa}\frac{d\xi}{ds})^2}}{1-\xi^2}ds})ds}\\
y(s)&=&\int{\sqrt{1-\xi^2}\sin(\int{\frac{\kappa\sqrt{1-\xi^2-(\frac{1}{\kappa}\frac{d\xi}{ds})^2}}{1-\xi^2}ds})ds}\\
z(s)&=&\int {\xi} ds.
\end{eqnarray*}In effect,
\begin{eqnarray*}
&&\int{\frac{\kappa\sqrt{1-\xi^2-(\frac{1}{\kappa}\frac{d\xi}{ds})^2}}{1-\xi^2}}ds=\frac{1}{\sqrt{1+m^2}}
\int{\frac{\sqrt{1+m^2(\int^s_0{\kappa}ds)^2}}{1-(\frac{m^2}{1+m^2})(\int^s_0{\kappa}ds)^2}}ds\\
&=&-\frac{\sqrt{1+m^2}}{m}\arccos{(m\int^s_0\kappa ds)}-\arctan{(\frac{m^2\int^s_0\kappa ds}{\sqrt{1+m^2}\sqrt{1-m^2(\int^s_0\kappa ds)^2}}}).
\end{eqnarray*}
Therefore, considering case $m>0$, the components of the position vector are:
\begin{eqnarray*}
x(s)&=&\int(\sqrt{1-m^2(\int^s_0 \kappa ds)^2}\cos[\frac{\sqrt{1+m^2}}{m}\arccos(m\int^s_0\kappa ds)]\\&-&\frac{m^2}{\sqrt{1+m^2}}(\int^s_0{\kappa ds})\sin[\frac{\sqrt{1+m^2}}{m}\arccos(m\int^s_0\kappa ds)])ds\\
&=&\frac{1}{\sqrt{1+m^2}}\int(\int{\sin[\frac{\sqrt{1+m^2}\arccos(m\int^s_0\kappa ds)}{m}]\kappa(s)}ds)ds\\
y(s)&=&\int(-\sqrt{1-m^2(\int^s_0 \kappa ds)^2}\sin[\frac{\sqrt{1+m^2}}{m}\arccos(m\int^s_0\kappa ds)]\\&-&\frac{m^2}{\sqrt{1+m^2}}(\int^s_0{\kappa ds})\cos[\frac{\sqrt{1+m^2}}{m}\arccos(m\int^s_0\kappa ds)])ds\\
&=&\frac{1}{\sqrt{1+m^2}}\int(\int{\cos[\frac{\sqrt{1+m^2}\arccos(m\int^s_0\kappa ds)}{m}]\kappa(s)}ds)ds\\
z(s)&=&\frac{m}{\sqrt{1+m^2}}\int(\int^s_0\kappa ds)ds.
\end{eqnarray*}
Respectively, considering the case $m<0$, the components of the position vector are:
\begin{eqnarray*}
x(s)&=&\frac{1}{\sqrt{1+m^2}}\int(\int{\sin[\frac{\sqrt{1+m^2}\arccos(m\int^s_0\kappa ds)}{m}]\kappa(s)}ds)ds\\
y(s)&=&\frac{1}{\sqrt{1+m^2}}\int(\int{\cos[\frac{\sqrt{1+m^2}\arccos(m\int^s_0\kappa ds)}{m}]\kappa(s)}ds)ds\\
z(s)&=&\frac{-m}{\sqrt{1+m^2}}\int(\int^s_0\kappa ds)ds.
\end{eqnarray*}
Now assume property (3) holds, a direct calculation shows that
\begin{equation*}
<\textbf{n}(s),(0,0,1)>=\frac{\mid m\mid}{\sqrt{1+m^2}},
\end{equation*}this is the principal normal lines of $\alpha$ make a constant angle with a fixed direction $U=(0,0,1).$
\end{proof}
\section{Conclusion.}
The differential equation
 \begin{equation*}
\frac{d}{ds}\{\frac{1}{\kappa}\frac{d\xi}{ds}\}=-\kappa \xi+\tau\sqrt{1-\xi^2-(\frac{1}{\kappa}\frac{d\xi}{ds})^2}
\end{equation*} and the curve
 $\alpha(s)=(x(s),y(s),z(s))$, where
\begin{eqnarray*}
x(s)&=&\int{\sqrt{1-\xi^2}\cos(\int{\frac{\kappa\sqrt{1-\xi^2-(\frac{1}{\kappa}\frac{d\xi}{ds})^2}}{1-\xi^2}ds})ds}\\
y(s)&=&\int{\sqrt{1-\xi^2}\sin(\int{\frac{\kappa\sqrt{1-\xi^2-(\frac{1}{\kappa}\frac{d\xi}{ds})^2}}{1-\xi^2}ds})ds}\\
z(s)&=&\int {\xi} ds,
\end{eqnarray*} associated with it, allow us to study from another point of view the curves parametrized by arc length in space.

\bibliographystyle{amsplain}

\end{document}